\documentclass[a4paper,12pt]{amsart}

\usepackage{amsmath,amsfonts,amssymb,amsthm}
\usepackage{mathtools,mathdots}
\usepackage[hidelinks]{hyperref}
 
\theoremstyle{plain}

\newtheorem{thm}{Theorem}[section]

\newtheorem{lem}[thm]{Lemma}
\newtheorem{prop}[thm]{Proposition}

\newtheorem{thmABC}{Theorem}

\newtheorem{corABC}[thmABC]{Corollary}

\theoremstyle{definition}

\newtheorem{defn}[thm]{Definition}
 \newtheorem{rmk}[thm]{Remark}
\numberwithin{equation}{section}

\newcommand{\norm}[1]{{\lvert #1 \rvert}}
\newcommand{\wt}[1]{\widetilde{#1}}
\newcommand{\wh}[1]{\widehat{#1}}

\newcommand{\cwr}{\mbox{\textnormal{\small\textcircled{$\wr$}}}}

\newcommand{\mS}{\mathcal{S}}

\newcommand{\Sym}{\mathrm{Sym}}
\newcommand{\comm}[1]{}

\title[Subgroup growth of IWPPAs]{On subgroup growth of  iterated wreath \\ products in product action}

\thanks{The author is supported by the Italian program Rita Levi Montalcini for young researchers, Edition 2021.}

\begin{document}

\subjclass{20E18; 20E07, 20E22}
\keywords{Subgroup growth, just infinite group, profinite group}

\author[Matteo Vannacci]{Matteo Vannacci}
\address{Dipartimento di Matematica `Ulisse Dini', Universit\`a degli Studi di Firenze, Viale Morgagni, 67/a - 50134 Firenze, Italy}
\email{matteo.vannacci@unifi.it}

\begin{abstract}
We show that there are hereditarily just infinite groups of any subgroup growth type between $n$ and $n^{\log n}$. This is obtained calculating the subgroup growth type of a family of hereditarily just infinite profinite groups obtained via iterated wreath products of finite permutation groups with respect to product actions.
\end{abstract}

\maketitle

\section{Introduction}
\subsection{State of the art}
Let $G$ be a finitely generated profinite group, let $n$ be a positive integer and let $s_n(G)$ denote the number of open subgroups of index at most $n$ in $G$. The sequence $(s_n(G))_{n\in \mathbb{N}}$ has been widely studied; see for instance \cite{segal:finiteimages,ls:subgroupgrowth,barnea_schlagepuchta}. Given a function $f:\mathbb{N}\to \mathbb{R}$, we say that $G$ has \emph{subgroup growth type} $f$, if there are constants $b,c>0$ such that $s_n(G)\le f(n)^b$ for all sufficiently large $n$ and $s_n(G) \ge f(n)^c$ for infinitely many $n$. 

% The main hope in introducing this concept was to obtain a classification of profinite groups ``by growth type''. This led to a beautiful theory and one of its main results is the characterization of the profinite groups with \emph{polynomial subgroup growth}\footnote{i.e.\ finitely generated profinite groups with growth type $n$.}: a finitely generated profinite group has polynomial subgroup growth if and only if it is virtually soluble of finite rank.
  
A natural question that one might ask is whether all positive and increasing functions $f:\mathbb{N}\to \mathbb{R}$ appear as the subgroup growth type of a certain profinite group. This question was referred to as the `Subgroup Growth Gap Problem'. The answer to this question is substantially `yes' and it was settled in two parts, using very different constructions, by Dan Segal in \cite{segal:finiteimages} and L\'aszl\'o Pyber in \cite{pyber}. Segal's answer is based on the construction of suitable \emph{iterated wreath products w.r.t.\ imprimitive actions} with specified subgroup growth type. In this short note we prove some analogous results for certain \emph{hereditarily just infinite profinite groups} that are obtained as inverse limits of iterated wreath products w.r.t.\ product actions. We recall that a profinite group $G$ is just infinite if $G$ is infinite and every non-trivial closed normal subgroup $N$ of $G$ is open.  The group $G$ is \emph{hereditarily just infinite} if every open subgroup $H$ of $G$ is just infinite.

Hereditarily just infinite groups are still a very mysterious family and several natural questions remain unanswered, see for instance \cite[Problems 20.110 and 20.111]{kourovkanotebook}.

One of the most studied classes of hereditarily just infinite groups is that of infinitely iterated wreath products in product action (\cite{matteo,matteoben1,matteoben2}). These groups arise as follows. For $A\le \mathrm{Sym}(\Omega)$ and $B\le \mathrm{Sym}(\Delta)$, denote by $A\cwr B\le \mathrm{Sym}(\Omega^\Delta)$ the wreath product w.r.t.\ product action of $A$ by $B$. Let $\mS = (S_k)_{k\in \mathbb{N}}$, with $S_k \le \Sym(\Omega_k)$, be a sequence of finite transitive permutation groups.  The inverse limit
\[
W^\mathrm{pa}(\mS) = \varprojlim W^\mathrm{pa}_n
\]
of the inverse system $W^\mathrm{pa}_1 \twoheadleftarrow W^\mathrm{pa}_2 \twoheadleftarrow \ldots$ of finite iterated wreath products w.r.t.\ product actions
\[
  W^\mathrm{pa}_n  = S_n \,\cwr\, (S_{n-1} \,\cwr\, ( \cdots \,\cwr\,
  S_1 )) \le
                    \Sym(\wh{\Omega}_n) \quad \text{for $\wh{\Omega}_n =
                      \Omega_n^{\big(\Omega_{n-1}^{\big(\iddots^{\Omega_0}\big)} \big)}$}. 
\]
is called the \emph{infinitely iterated wreath product of type $\mS$ w.r.t.\ product actions}.
By \cite[Theorem~6.2]{reid:characterization} and
\cite{quick:probabilisticgeneration}, every infinitely iterated
wreath product w.r.t.\ product actions $W^\mathrm{pa}(\mS)$, based on a
sequence $\mS$ of finite non-abelian simple permutation groups, is a $2$-generated hereditarily just infinite profinite group that is not virtually pro-$p$ for any prime~$p$. These groups have received some attention in the last few years due to their ``exotic'' properties; see \cite{matteo,matteoben1,matteoben2}.
\subsection{Main result} 
We say that a function $f:\mathbb{N}\rightarrow \mathbb{R}$ is \emph{gently growing} if $f$ is unbounded, non-decreasing, positive and there exist a constants $A,N>0$ such that 
\[
  f(x^{\log x}) \le A f(x) \quad \text{for all $x\ge N$.}
\]
Our main result is the following (cfr.~\cite[Thm.~3]{segal:finiteimages}). 

\begin{thmABC}\label{thm:A}
 Let $f:\mathbb{N}\rightarrow \mathbb{R}$ be a gently growing function. Then there exists a sequence $(p_k)_{k\in \mathbb{N}}$ of primes such that the infinitely iterated wreath product of type $\mathcal{P} = \{\mathrm{PSL}_2(\mathbb{F}_{p_k})\}_{k\in \mathbb{N}}$ w.r.t.\ product actions, with $\mathrm{PSL}_2(\mathbb{F}_{p_k})$ acting on the projective line over $\mathbb{F}_{p_k}$, has subgroup grow type $n^{f(n)}$.
\end{thmABC}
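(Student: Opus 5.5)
We follow Segal's strategy for the imprimitive case, adapted to product action. We get an upper and a lower bound on $s_n(W)$ for $W=W^{\mathrm{pa}}(\mathcal{P})$, both in terms of the degrees $d_k=\norm{\wh{\Omega}_k}$ and the primes $p_k$. Then we choose $p_k$ inductively so that $\log p_k$ tracks $f$ at the right scale. Note that $\Omega_k$ is the projective line, so $\norm{\Omega_k}=p_k+1$ and $d_k=(p_k+1)^{d_{k-1}}$. The point stabiliser in $\mathrm{PSL}_2(\mathbb{F}_{p_k})$ is a Borel subgroup of index $p_k+1$, and every proper subgroup has index at least roughly $p_k$ (with finitely many small exceptions). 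Write $K_k$ for the kernel of $W\to W^{\mathrm{pa}}_k$. Then $K_{k-1}/K_k\cong \mathrm{PSL}_2(\mathbb{F}_{p_k})^{d_{k-1}}$, and $\norm{W^{\mathrm{pa}}_k}\le \norm{\mathrm{PSL}_2(\mathbb{F}_{p_k})}^{d_{k-1}}\norm{W^{\mathrm{pa}}_{k-1}}$.

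\emph{Lower bound.} Let $n$ be about $d_k$, or a small multiple of it. A point stabiliser of $W^{\mathrm{pa}}_k$ on $\wh{\Omega}_k$ has index $d_k$. We want many subgroups of index at most $n$. We use the $p_k$-Sylow-type elementary abelian subgroups: inside the base group $\mathrm{PSL}_2(\mathbb{F}_{p_k})^{d_{k-1}}$, the unipotent radicals give an elementary abelian $p_k$-section of rank $d_{k-1}$. Following Segal, we count subgroups of $W^{\mathrm{pa}}_k$ of index $\le n$ that contain a fixed large subgroup and differ from it in a $\mathbb{F}_{p_k}$-vector space of dimension $\sim d_{k-1}$. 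This gives about $p_k^{c\,d_{k-1}^2}$ subgroups, i.e.\ $n^{c\,d_{k-1}\log p_k/\log(p_k+1)}$. One has to set it up so that the index stays polynomial in $d_k$, i.e.\ $(p_k+1)^{O(d_{k-1})}$. So $s_n(W)\ge n^{c\,d_{k-1}}$ for these $n$. We therefore want $d_{k-1}\approx f(d_k)$, i.e.\ $\log d_k= d_{k-1}\log(p_k+1)$ with $p_k$ chosen so that $d_{k-1}$ sits at the level of $f(d_k)$. Since $d_k$ grows fast as a function of $p_k$ and $f$ is unbounded, we can pick $p_k$ large enough that $f((p_k+1)^{d_{k-1}})\ge d_{k-1}$. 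We pick it minimal with this property, so that also $f$ evaluated a little lower is still below $d_{k-1}$.

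\emph{Upper bound.} For $H\le_o W$ of index $\le n$, let $k$ be minimal with $d_k$ exceeding the minimal index, about $p_{k+1}$, of a proper subgroup in the next layer. Because each $K_{j-1}/K_j$ is a product of nonabelian simple groups, and $W$ is hereditarily just infinite with all proper subgroups of large index, $H$ must contain $K_j$ for $j$ with $p_{j+1}>n$. Indeed, otherwise the projection of $H\cap K_j$ to some factor $\mathrm{PSL}_2(\mathbb{F}_{p_{j+1}})$ is proper and forces index $\ge p_{j+1}$. This reduces counting to subgroups of $W^{\mathrm{pa}}_j$ of index $\le n$. We then use the bound $s_n(G)\le n^{O(\log n + \mathrm{rk})}$: a subgroup of index $\le n$ in a finite group is determined by at most about $\log_2 n+d(G)$ generators, together with the Lubotzky bound $s_n\le n^{c\,d}$ for groups whose subgroups are $d$-generated. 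The subgroups of $W^{\mathrm{pa}}_j$ of index $\le n$ contain, up to bounded index, large pieces of the base, and are generated by $O(d_{j-1}+\log n)$ elements. This gives $s_n(W)\le n^{C(d_{j-1}+\log n)}$. With the choice of $p_k$ and the gentle growth condition $f(x^{\log x})\le Af(x)$, which absorbs the jump between consecutive scales and the $\log n$ term (this is why the type lies between $n$ and $n^{\log n}$), we get $d_{j-1}+\log n\le C' f(n)$ for all large $n$.

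\emph{Main obstacle.} The delicate step is the upper bound. We need a uniform estimate on the number of generators of subgroups of index $\le n$ in $W^{\mathrm{pa}}_j$, bounded by $O(d_{j-1})$ plus $O(\log n)$, together with the interpolation for $n$ strictly between consecutive scales $d_{k-1}\ll n\ll d_k$. There the count must be controlled by $f(n)$ and not by $f(d_k)$. I would first try to bound the generating number of subgroups of index $\le n$ by the number of orbits of $H$ on the $d_{j-1}$ simple factors, times a constant, plus $\log n$. I would then use the gentle growth to compare $f(n)$ and $f(d_k)$, since $d_k\le d_{k-1}^{\,\log d_{k-1}}$-type relations hold once $p_k$ is chosen minimally.
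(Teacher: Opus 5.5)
There is a genuine gap, and it is in the upper bound, which you yourself leave unresolved.

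\textbf{The $\log n$ term.} Your reduction is correct. If $n\le p_{j+1}$, every open subgroup of index at most $n$ contains $K_j$, so $s_n(W)=s_n(W^{\mathrm{pa}}_j)$. This sharp form, with $W^{\mathrm{pa}}_j$ rather than $W^{\mathrm{pa}}_{j+1}$, is exactly what the argument needs. But the estimate you then propose, $s_n(W)\le n^{C(d_{j-1}+\log n)}$, can never give type $n^{f(n)}$. Iterating $f(x^{\log x})\le Af(x)$ shows that every gently growing $f$ satisfies $f(n)=O\big((\log\log n)^{\log_2 A}\big)$. So $d_{j-1}+\log n\le C'f(n)$ is false for all large $n$: gentle growth does not absorb the $\log n$ term, it forbids it.

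The $\log n$ only enters because you bound the number of generators of an index-$n$ subgroup by $d(G)+\log_2 n$, and that is unnecessary. The missing step is a rank bound. $W^{\mathrm{pa}}_j$ has a subnormal series with $\sum_{i<j}d_i$ factors, each a group $\mathrm{PSL}_2(\mathbb{F}_{p_i})$ of rank $2$. Moreover $\sum_{i<j}d_i\le 3d_{j-1}$, because iterated exponentials grow so fast (Lemma~\ref{lem:ancillary}). Hence $\mathrm{rk}(W^{\mathrm{pa}}_j)\le 6d_{j-1}$, and $s_n(W)\le n^{Cd_{j-1}}$ for all $n\le p_{j+1}$, with no $\log n$.

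\textbf{The interpolation.} Your interpolation is aimed at the wrong scales and rests on a false inequality. Since $\log d_k=d_{k-1}\log(p_k+1)$ is far larger than $(\log d_{k-1})^2$, you always have $d_k>d_{k-1}^{\log d_{k-1}}$. If the reverse held, gentle growth would give $f(d_k)\le Af(d_{k-1})$, which contradicts your own calibration $f(d_k)\approx d_{k-1}$, $f(d_{k-1})\approx d_{k-2}$.

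The bound does not change at the degrees $d_k$. By the reduction above it switches from $n^{Cd_{k-2}}$ to $n^{Cd_{k-1}}$ as $n$ passes $p_k$, which is where layer $k$ first has proper subgroups of index at most $n$. So the upper bound needs $f(n)\ge c\,d_{k-1}$ already for $n$ just above $p_k$. The lower bound needs $f(p_k^{9d_{k-1}})\le C\,d_{k-1}$. The paper gets both at once from Lemma~\ref{lem:prime}: $f(p_k)\ge 18d_{k-1}\ge Bf(p_k^{9d_{k-1}})$.

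Your minimal choice of $p_k$ can be rescued, but only by an argument you do not give. From $f(n)=O((\log\log n)^{a})$ and $f((p_k+1)^{d_{k-1}})\ge d_{k-1}$, one gets $d_{k-1}\le\log(p_k+1)$ for large $k$. Hence $(p_k+1)^{d_{k-1}}\le(p_k+1)^{\log(p_k+1)}$, and so $f(p_k+1)\ge d_{k-1}/A$.

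\textbf{The lower bound.} This is right in spirit but simpler than you make it. Take $n=\norm{\mathrm{PSL}_2(\mathbb{F}_{p_k})}^{3d_{k-1}}$. By Lemma~\ref{lem:ancillary} this exceeds $\norm{W^{\mathrm{pa}}_k}$, and it is at most $p_k^{9d_{k-1}}$. Then every subgroup of the elementary abelian group $E_k^{d_{k-1}}$ counts. This gives roughly $p_k^{d_{k-1}^2/4}\ge n^{d_{k-1}/36}$ subgroups, and no ``fixed large subgroup'' is needed.
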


In proving the theorem we also find an explicit expression for the subgroup growth of an infinitely iterated wreath product w.r.t.\ product actions. In particular, Theorem~\ref{thm:A} shows that hereditarily just infinite groups can have a wide spectrum of subgroup growth types.

\begin{corABC}
 Let $f:\mathbb{N}\rightarrow \mathbb{R}$ be a gently growing function. Then there exists a $2$-generated hereditarily just infinite profinite group with subgroup growth type $n^{f(n)}$.
\end{corABC}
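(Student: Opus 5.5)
This corollary follows directly from Theorem~\ref{thm:A} combined with the structural results quoted in the introduction; no new computation is needed. Given a gently growing $f$, Theorem~\ref{thm:A} supplies a sequence of primes $(p_k)_{k\in\mathbb{N}}$ such that
\[
G = W^\mathrm{pa}(\mathcal{P}), \qquad \mathcal{P} = \{\mathrm{PSL}_2(\mathbb{F}_{p_k})\}_{k\in\mathbb{N}},
\]
has subgroup growth type $n^{f(n)}$. I take this $G$ as the required group, and it remains to check that $G$ is $2$-generated and hereditarily just infinite.

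For this I will invoke the result recalled in the introduction: by \cite[Theorem~6.2]{reid:characterization} and \cite{quick:probabilisticgeneration}, every $W^\mathrm{pa}(\mS)$ built from a sequence $\mS$ of finite non-abelian simple transitive permutation groups is a $2$-generated hereditarily just infinite profinite group. So the only things to verify are the hypotheses on each $S_k = \mathrm{PSL}_2(\mathbb{F}_{p_k})$ acting on $\mathbb{P}^1(\mathbb{F}_{p_k})$.

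\begin{itemize}
\item Transitivity is clear, since $\mathrm{SL}_2$ is transitive on lines.
\item Non-abelian simplicity requires $p_k \ge 5$.
\end{itemize}

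The one place requiring care is the simplicity condition: the construction in Theorem~\ref{thm:A} must not use $p_k \in \{2,3\}$, where $\mathrm{PSL}_2$ is soluble. If the proof of Theorem~\ref{thm:A} does not already guarantee $p_k \ge 5$, I will argue as follows. The primes there are chosen sparse or large enough to realise the growth $f$, and this freedom allows discarding or enlarging finitely many of them. Changing finitely many factors of $\mathcal{P}$ only changes a finite quotient $W^\mathrm{pa}_n$ at the base. This multiplies index counts by a bounded amount and alters the relevant counts only up to constants, which are absorbed into the exponents $b,c$ in the definition of growth type.

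With the hypotheses verified, $G$ is $2$-generated and hereditarily just infinite. Together with its growth type $n^{f(n)}$ from Theorem~\ref{thm:A}, this proves the corollary.
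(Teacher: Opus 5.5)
Your main line is exactly how the paper gets the corollary: take the group $W^\mathrm{pa}(\mathcal{P})$ of Theorem~\ref{thm:A} and apply \cite[Theorem~6.2]{reid:characterization} and \cite{quick:probabilisticgeneration}, as recalled in the introduction. The condition $p_k\ge 5$ that you flag is genuinely needed. The construction in the proof of Theorem~\ref{thm:A} settles it directly: there $p_0\ge 18$ and $p_k>9\wh{m}_{k-1}\ge 9$ for $k\ge 1$, so every $p_k\ge 11$ and each $\mathrm{PSL}_2(\mathbb{F}_{p_k})$ is non-abelian simple. Cite this instead of your fallback.

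The fallback itself does not work and should be deleted.

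\begin{itemize}
\item \emph{Changing early factors is not a bounded perturbation of a finite quotient.} The degrees satisfy $\norm{\wh{\Omega}_k}=\norm{\Omega_k}^{\norm{\wh{\Omega}_{k-1}}}$. So changing $\norm{\Omega_j}$ for a single small $j$ changes every $\norm{\wh{\Omega}_k}$ with $k\ge j$. Once the old and new values of $\norm{\wh\Omega_{k-1}}$ differ, those of $\norm{\wh\Omega_k}$ differ by a factor of at least $\norm{\Omega_k}$, which is unbounded in $k$.
\item \emph{Hence the growth type changes.} By the Proposition preceding Lemma~\ref{lem:prime}, the growth type is $n^{\norm{\wh{\Omega}_{l(n)}}}$. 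Its exponent therefore changes by an unbounded factor, which cannot be absorbed into the constants $b,c$. The calibration $18\wh{m}_{k-1}\le f(p_k)$, $Bf(p_k^{9\wh{m}_{k-1}})\le 18\wh{m}_{k-1}$ is destroyed. Discarding primes shifts all the indices and has the same effect.
\item \emph{You have no such ``freedom'' anyway.} The bare existence statement of Theorem~\ref{thm:A} does not let you modify the primes. Any modification requires re-running its proof, and that proof is where $p_k\ge 11$ is already visible.
\end{itemize}
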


To conclude, we present two variations that deal with some non-gently growing functions, see Remark~\ref{rml:variations}. Finally, see Remark~\ref{rmk:exp_growth} for some observations about hereditarily just infinite pro-$p$ groups of exponential growth. 

\section{Nice sequences of groups} \label{sec:prelim}

We are going to prove Theorem~\ref{thm:A} in a slightly more general form. To do so, we will work with `nice sequences of groups'. We denote by $\mathrm{rk}(G)$ %=\max\{\mathrm{d}(H) \vert H\le G\}$
the \emph{rank} of the finite group $G$, i.e.\ the maximum among the minimal number of generators of all subgroups of $G$. Moreover, we write $\mu(G)$ for the minimal index of any proper subgroup in $G$. For instance, for the alternating group $\mathrm{Alt}(5)$, we have $\mu(\mathrm{Alt}(5))=5$.

\begin{defn}
 Let $r$ and $t$ be positive real numbers. A sequence of finite groups $(S_k)_{k\in \mathbb{N}}$ is said \emph{nice with constants $r$ and $t$} if:
\begin{enumerate}  
  \item[N.$1$]  $\norm{S_k} \ge \norm{S_{k-1}}$ for all $k$;
  \item[N.$2$]  $\mathrm{rk}(S_k) \le r$ for all $k$;
  \item[N.$3$]  there is $E_k\le S_k$ elementary abelian such that $\norm{S_k} \le \norm{E_k}^t$ for all $k$; %$S_k$ contains an elementary abelian subgroup $E_k$ such that $\norm{S_k} \le \norm{E_k}^t$ for all $k$; 
  \item[N.$4$]  %if $\mu_k$ is the minimal index of any proper subgroup in $S_k$, then 
  $\mu(S_k) \ge \mu(S_{k-1})$  and $\norm{S_k} \le \mu(S_k)^t$ for all $k$;
  \item[N.$5$]  $\lim_{k\rightarrow \infty} \mu(S_k) = \infty$.
   \end{enumerate}
\end{defn}

\begin{rmk}\label{rmk:psl2}
    Let $(p_k)_{k\in \mathbb{N}}$ be a strictly increasing sequence of primes.  Then the sequence $(\mathrm{PSL}_2(p_k))_{k\in \mathbb{N}}$ is nice with $r=2$ and $t=3$.
\end{rmk}

In fact, $\mathrm{PSL}_2(p_k)$ has rank $2$, $\mu(\mathrm{PSL}_2(p_k))= p_k+1$ and, defining $\tau_k = \norm{\mathrm{PSL}_2(p_k)}$ and $\mu_k= \mu(\mathrm{PSL}_2(p_k))$, we have that $$p_k <\tau_k =\frac{p_k(p_k^2-1)}{2} < \norm{E_k}^3 < \mu_k^3,$$
where $E_k$ is the subgroup of upper unitriangular matrices in~$\mathrm{PSL}_2(p_k)$.

\section{Proof of Theorem~\ref{thm:A}}
We start with a lemma about the extremely fast growth of the ``iterated exponentiation'' of a series of integers.

\begin{lem}\label{lem:ancillary}
Let $(a_k)_{k\in \mathbb{N}}$ be a sequence of positive integers with $a_k\ge 2$ for all $k$. Set $\widehat{a}_0=1$ and $\widehat{a}_n= a_n^{\widehat{a}_{n-1}}$ for $n\ge 1$. Then there exists $N\in \mathbb{N}$ such that $$\sum_{j=0}^n \widehat{a}_j \le 3\, \widehat{a}_n \quad \text{for every $n\ge N$.}$$
\begin{proof}
First, we claim that for every real constant $C>0$ there exists a natural number $M(C)$ such that
 \begin{equation}\label{eq:ak}
 C\, \widehat{a}_{n-1} \le \widehat{a}_n \text{\quad for every $n\ge M(C)$}.
 \end{equation}
It is clear that $(\widehat{a}_k)_{k\in \mathbb{N}}$ is increasing and tends to infinity. In particular, so does $(2^{\widehat{a}_k}/\widehat{a}_k)_{k\in \mathbb{N}}$. This fact, together with the assumption that $a_k\ge 2$ for every $k$, readily implies \eqref{eq:ak}. 

Using \eqref{eq:ak}, we can now show that 
 \begin{equation}\label{eq:sumlessthan2}
  \sum_{j=M(2)}^{n} \widehat{a}_j \le 2\, \widehat{a}_{n} \text{\quad for all $n\ge M(2)$}.
 \end{equation}
 This will be proved by induction on $n$. It is clear that $\widehat{a}_{M(2)}\le 2\, \widehat{a}_{M(2)}$. Suppose by inductive hypothesis that $\sum_{j=M(2)}^{n-1} \widehat{a}_j \le 2\, \widehat{a}_{n-1}$. By \eqref{eq:ak}, $$\sum_{j=M(2)}^n \widehat{a}_j \le 2\, \widehat{a}_{n-1} + \widehat{a}_n\le 2\, \widehat{a}_n,$$ for $n\ge M(2)$. 
 To conclude the proof, it is sufficient to set $N=\max\{M(2)+1,M(M(2))\}$. In fact, in virtue of \eqref{eq:ak}, $M(2)\, \widehat{a}_{M(2)}\le M(2)\, \widehat{a}_{n-1}\le \widehat{a}_n$ for all $n\ge N$. Therefore, by \eqref{eq:sumlessthan2}, $$\sum_{j=0}^{n} \widehat{a}_j \le M(2)\, \widehat{a}_{M(2)} + 2\, \widehat{a}_n \le 3\, \widehat{a}_n \text{\quad for every $n\ge N$}.  \qedhere$$ 
\end{proof}
\end{lem}

Next we bound the subgroup growth of infinitely iterated wreath products in product action from above.

\begin{lem}\label{lem:subgroupgrowth1}
 Let $\mS = (S_k)_{k\in \mathbb{N}}$ be a nice sequence of non-trivial permutation groups $S_k\le \mathrm{Sym}(\Omega_k)$ with constants $r$ and $t$. If $l,n\in \mathbb{N}$ are such that $n<\mu(S_{l+1})$ then $$s_n(W^\mathrm{pa}(\mS))\le n^{3rt\norm{\wh{\Omega}_l}}.$$
\begin{proof}
 % In virtue of Lemma~\ref{lem:ancillary}, there exists $N\in\mathbb{N}$ such that 
%  \[
%    \sum_{i=1}^n \norm{\wh{\Omega}_i} = \sum_{i=1}^{N-1} \norm{\wh{\Omega}_i} + \sum_{i=N}^n \norm{\wh{\Omega}_i}  \le (N-1) \norm{\wh{\Omega}_{N-1}} + 2 3\wt{m}_k
%  \]
% for every $k$ large enough.

Let $l$ and $n$ be as in the hypotheses. We will first show that 
\begin{equation}\label{eq:s_n}
  s_n(W^\mathrm{pa}_{i}) = s_n(W^\mathrm{pa}_{i-1}) = \ldots = s_n(W^\mathrm{pa}_{l+1}) \text{\quad for $i>l$.}
\end{equation}
In fact, by hypothesis, $\mu(S_i) \ge \mu(S_{l+1}) >n$ for $i>l$. Hence, for every $a\in \mathbb{N}$, the direct power $S_i^{a}$ has no proper subgroup of index $n$ or less. Therefore every subgroup of index less to $n$ in $W^\mathrm{pa}_{i}$ contains the base subgroup $S_i^{\norm{\wh{\Omega}_{i-1}}}$ and the claim follows. In particular, \eqref{eq:s_n} yields that $s_n(W^\mathrm{pa}(\mS))= s_n(W^\mathrm{pa}_{l+1})$. To conclude the proof it is then sufficient to estimate the subgroup growth of $s_n(W^\mathrm{pa}_{l+1})$.

Since the permutation groups in the sequence are non-trivial, we have that $\norm{\Omega_k}\ge 2$ for every $k$ and we can apply Lemma~\ref{lem:ancillary} to the sequence $(\norm{\wh{\Omega}_k})_{k\in \mathbb{N}\cup \{0\}}$ with $\norm{\wh{\Omega}_0}=1$. Now, $W^\mathrm{pa}_{l+1}$ has a subnormal series of length $\sum_{j=0}^l \norm{\wh{\Omega}_j} < 3\norm{\wh{\Omega}_l}$. By \cite[Proposition~1.9.1]{ls:subgroupgrowth}, we deduce that 
\[
  s_n(\wt{S}_{l+1}) \le n^{3rt\norm{\wh{\Omega}_l}}
\]
and the lemma is proved.
\end{proof} 
 \end{lem}
 
% Let us denote by $s(G)$ the total number of subgroups of the finite group~$G$.

Finally, we bound the subgroup growth of infinitely iterated wreath products in product action from below.
 
 \begin{lem}\label{lem:subgroupgrowth2}
  Let $\mS = (S_k)_{k\in \mathbb{N}}$ be a nice sequence of permutation groups $S_k\le \mathrm{Sym}(\Omega_k)$ with constants $r$ and $t$. For $n= \norm{S_{l+1}}^{3\norm{\wh{\Omega}_l}}$ we have $s_n(W^\mathrm{pa}(\mS)) \ge n^{\norm{\wh{\Omega}_l}/12t}$.
\begin{proof}
Set $\norm{\wh{\Omega}_0}=1$. By definition of $W^\mathrm{pa}_{l+1}$ and by hypothesis, it is clear that 
 \[
   \norm{W^\mathrm{pa}_{l+1}} = \prod_{j=0}^l \norm{S}_{j+1}^{\norm{\wh{\Omega}_{j}}} \le \norm{S_{l+1}}^{\sum_{j=0}^l \norm{\wh{\Omega}_{j}}} < \norm{S_{k+1}}^{3\norm{\wh{\Omega}_{l}}} = n,
 \]
where in the last inequality we applied Lemma~\ref{lem:ancillary}.
 On the other hand $W^\mathrm{pa}_{l+1}$ contains $S_{l+1}^{\norm{\wh{\Omega}_l}}$. By hypothesis, $S_{l+1}^{\norm{\wh{\Omega}_l}}$ in turn contains 
the elementary abelian subgroup $E_{l+1}^{\norm{\wh{\Omega}_l}}$. Suppose $E_{l+1} = C_p^e$, then, again by hypothesis, $p^{et}\ge \norm{S_{l+1}}$. Moreover, $E_{l+1}^{\norm{\wh{\Omega}_l}}$ has at least $p^{e^2 \norm{\wh{\Omega}_l}^2 /4}$ subgroups. Since $\norm{W^\mathrm{pa}_{l+1}} < n$, we have that $s_n(W^\mathrm{pa}_{l+1})$ %= s(\wt{S}_{l+1})$
is equal to the total number of subgroups of the finite group~$\wt{S}_{l+1}$. Therefore
\begin{multline*}
  s_n(W^\mathrm{pa}(\mS)) \ge s_n(W^\mathrm{pa}_{l+1})  \ge p^{e^2 \norm{\wh{\Omega}_l}^2 /4} \ge \norm{S_{l+1}}^{\norm{\wh{\Omega}_l}^2/4t} = n^{\norm{\wh{\Omega}_l}/12t}.  \qedhere
\end{multline*}
\end{proof}
  \end{lem}

%\section{Proof of Theorem~\ref{thm:A}}
The following immediate consequence of Lemma~\ref{lem:subgroupgrowth1} and Lemma~\ref{lem:subgroupgrowth2} might be of independent interest.
\begin{prop}
Let $\mS = (S_k)_{k\in \mathbb{N}}$ be a nice sequence of finite permutation groups $S_k\le \mathrm{Sym}(\Omega_k)$. Then $W^\mathrm{pa}(\mS)$ has subgroup growth type $n^{\norm{{\wh{\Omega}_{l(n)}}}}$ where $l(n) = \min \{t\in \mathbb{N}\ \vert\ n< \mu(S_{t+1})\}$. 
%\begin{proof}
%Immediate from .
%\end{proof}
\end{prop}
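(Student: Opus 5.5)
The plan is to prove the two halves of the definition of subgroup growth type separately, with $f(n)=n^{\norm{\wh{\Omega}_{l(n)}}}$. For the upper bound I will use Lemma~\ref{lem:subgroupgrowth1}. For the lower bound I will use Lemma~\ref{lem:subgroupgrowth2} along a suitable infinite sequence of indices. Throughout, $r,t$ are the constants of the nice sequence. By N.5 and N.4, $\mu(S_k)\to\infty$ monotonically, so $l(n)$ is well defined and finite for every $n$, and non-decreasing in $n$. Discarding finitely many initial terms if necessary, I may assume every $S_k$ is non-trivial, so Lemma~\ref{lem:ancillary} applies to $(\norm{\wh{\Omega}_k})_k$.

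\emph{Upper bound.} Fix $n$ and put $l=l(n)$. By the definition of $l(n)$ we have $n<\mu(S_{l+1})$, which is exactly the hypothesis of Lemma~\ref{lem:subgroupgrowth1}. That lemma gives
\[
s_n(W^\mathrm{pa}(\mS))\le n^{3rt\norm{\wh{\Omega}_{l(n)}}}=f(n)^{3rt}
\]
for every $n$ large enough that Lemma~\ref{lem:ancillary} is in force. So $b=3rt$ works, and nothing more is needed for this half.

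\emph{Lower bound.} I need infinitely many $n$ with $s_n\ge f(n)^c$. The natural candidates are the values $n_l=\norm{S_{l+1}}^{3\norm{\wh{\Omega}_l}}$ of Lemma~\ref{lem:subgroupgrowth2}, at which $s_{n_l}\ge n_l^{\norm{\wh{\Omega}_l}/12t}$. The real task is to compare $\norm{\wh{\Omega}_l}$ with $\norm{\wh{\Omega}_{l(n_l)}}$. Since $n_l\ge\norm{S_{l+1}}\ge\mu(S_{l+1})$, we get $l(n_l)\ge l+1$, so in general the index shifts upwards.

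The step I expect to be the main obstacle is the following. I will try to choose the test points inside the window where $l(n)=l$, namely $\mu(S_l)\le n<\mu(S_{l+1})$. I will restrict to the infinitely many $l$ with $\mu(S_{l+1})>\mu(S_l)$, which exist by N.5. Inside this window I would redo the counting argument of Lemma~\ref{lem:subgroupgrowth2}. The aim is to count subgroups of index at most $n$ arising from the elementary abelian subgroup $E_{l+1}^{\norm{\wh{\Omega}_l}}$ of the base group of $W^\mathrm{pa}_{l+1}$. N.3 and N.4 (that is, $\norm{S_{l+1}}\le\mu(S_{l+1})^t$ and $\norm{S_{l+1}}\le\norm{E_{l+1}}^t$) should let me convert a count of the form $p^{\Theta(e^2\norm{\wh{\Omega}_l}^2)}$ into $n^{c\norm{\wh{\Omega}_l}}$ with $c$ depending only on $t$.

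If those subgroups turn out to have index too large to lie in the window, the fallback is different. I would take $n=n_l$ and show that the exponent $\norm{\wh{\Omega}_l}/12t$ can be traded for a constant multiple of $\norm{\wh{\Omega}_{l(n_l)}}$. To do this, I would bound $l(n_l)$ using N.4 together with $n_l=\norm{S_{l+1}}^{3\norm{\wh{\Omega}_l}}$. This bookkeeping between consecutive levels is where I expect the argument to need the most care.
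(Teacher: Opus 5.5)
Your upper bound is the paper's argument (Lemma~\ref{lem:subgroupgrowth1} with $l=l(n)$) and is fine. The lower bound, however, is only a plan, and neither route can be completed, because the lower bound asserted in the statement is false. You noticed that $l(n_l)\ge l+1$ at the test points $n_l=\norm{S_{l+1}}^{3\norm{\wh{\Omega}_l}}$. That is exactly where the paper's claim that the proposition is an ``immediate consequence'' of Lemmas~\ref{lem:subgroupgrowth1} and~\ref{lem:subgroupgrowth2} breaks down, and the paper gives no argument beyond that sentence.

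To see that no argument can work, note that the proof of Lemma~\ref{lem:subgroupgrowth1} applies verbatim with $i=l+1$. If $n<\mu(S_{l+1})$, every subgroup of index at most $n$ in $W^\mathrm{pa}_{l+1}$ contains the base group $S_{l+1}^{\norm{\wh{\Omega}_l}}\supseteq E_{l+1}^{\norm{\wh{\Omega}_l}}$, so $s_n(W^\mathrm{pa}(\mS))=s_n(W^\mathrm{pa}_l)$. In particular, inside the window $\mu(S_l)\le n<\mu(S_{l+1})$, every subgroup you want to build from $E_{l+1}^{\norm{\wh{\Omega}_l}}$ has index larger than $n$, so route (a) counts nothing.

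Next, $W^\mathrm{pa}_l$ has a subnormal series of length $\sum_{j<l}\norm{\wh{\Omega}_j}\le 3\norm{\wh{\Omega}_{l-1}}$ with factors among the $S_j$. Iterate $s_n(G)\le s_n(G/N)\,s_n(N)\,n^{\mathrm{rk}(G/N)}$ along it. Use $s_n(S_j)\le n^{rt}$: either $\mu(S_j)>n$, or $\norm{S_j}\le n^t$ and $S_j$ has at most $\norm{S_j}^r$ subgroups. This gives $s_n(W^\mathrm{pa}(\mS))\le n^{3r(t+1)\norm{\wh{\Omega}_{l(n)-1}}}$ for all large $n$. Since $\norm{\wh{\Omega}_l}\ge 2^{\norm{\wh{\Omega}_{l-1}}}$, for every $c>0$ we get $s_n(W^\mathrm{pa}(\mS))<n^{c\norm{\wh{\Omega}_{l(n)}}}$ for all sufficiently large $n$, for every nice sequence. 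So route (b), trading $\norm{\wh{\Omega}_l}$ for a constant multiple of $\norm{\wh{\Omega}_{l(n_l)}}$, is impossible as well.

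What the lemmas actually support is a shifted statement under an extra growth hypothesis. Suppose $\mu(S_{l+2})>\norm{S_{l+1}}^{3\norm{\wh{\Omega}_l}}$ for infinitely many $l$. Then $l(n_l)=l+1$ for those $l$, and Lemma~\ref{lem:subgroupgrowth2} together with the sharpened upper bound shows that $W^\mathrm{pa}(\mS)$ has subgroup growth type $n^{\norm{\wh{\Omega}_{l(n)-1}}}$.

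Some hypothesis of this kind is necessary. Take $S_k=\mathrm{PSL}_2(q_k)$ with $q_k$ running through consecutive (large) primes. Then $W^\mathrm{pa}(\mS)$ is $2$-generated, so $s_n\le n\,(n!)^2$, whereas $\norm{\wh{\Omega}_{l(n)-1}}$ is at least a tower of $2$'s of height roughly the number of primes up to $n$.

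The hypothesis does hold, for large $k$, for the primes chosen in the proof of Theorem~\ref{thm:A}. It follows from $f(p_{k+1})\ge 18\wh{m}_k$, $Bf(p_k^{9\wh{m}_{k-1}})\le 18\wh{m}_{k-1}$ and monotonicity of $f$. The upper-bound step of that proof also needs the sharpened estimate: with $\mu_{k-1}\le n<\mu_k$ one has $n\le p_k$, so the inequality $f(p_k)\le f(n)$ used there is not available.
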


 Note that the function $l(n)$ is defined for every $n$, because of the definition of nice sequence. In particular, we determined the subgroup growth type of every infinitely iterated wreath product w.r.t.\ product actions obtained from a nice sequence of permutation groups. The proof of Theorem~\ref{thm:A} will be completed after we show that we can choose the sequence of simple groups to achieve a specific subgroup growth rate. We first need a technical number-theoretic lemma.

\begin{lem}\textnormal{(\cite[Lemma~13.3.3]{ls:subgroupgrowth})}\label{lem:prime}
Let $f:\mathbb{N}\rightarrow \mathbb{R}$ be a gently growing function. Then there exist constants $B,C\in \mathbb{R}$ with $B,C>0$ such that for every integer $m\ge C$ there exists a prime $p>m$ with 
\[
  f(p) \ge 2m \ge Bf(p^m).
\]
\end{lem}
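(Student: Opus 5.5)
This lemma is taken from Lubotzky--Segal, so one option is to cite \cite[Lemma~13.3.3]{ls:subgroupgrowth}. To make the argument self-contained, I would prove it from the gently growing hypothesis together with Bertrand's postulate.

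The plan is a discrete intermediate value argument. First, since $f$ is unbounded and non-decreasing, for every large integer $m$ the set $\{p \text{ prime} : p>m,\ f(p)\ge 2m\}$ is non-empty. Let $p$ be its \emph{least} element. This choice automatically gives $p>m$ and $f(p)\ge 2m$, so only the inequality $2m\ge Bf(p^m)$ remains.

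Minimality of $p$ leaves two cases.
\begin{enumerate}
\item[(a)] The prime $p$ is the smallest prime exceeding $m$. By Bertrand, $p\le 2m$, so $p^m\le (2m)^{2m}$.
\item[(b)] The prime $q$ immediately preceding $p$ satisfies $q>m$ and $f(q)<2m$. By Bertrand, $p\le 2q$, so $p^m\le (2q)^{q}$.
\end{enumerate}
In both cases I want to bound $f(p^m)$ by a constant times something at most $2m$, using the gently growing condition $f(x^{\log x})\le Af(x)$.

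In case (b), set $x=2q$. For $q$ large, $x^{\log x}=(2q)^{\log 2q}\ge(2q)^q$ fails, because $\log 2q<q$, so one step of the condition is not enough. Instead I would iterate it twice. Since $p^m\le (2q)^{2q}$, put $y=x^{\log x}$; then $y^{\log y}=x^{(\log x)^2\cdot\ldots}$, and I need this to dominate $x^{x}$. That needs $(\log x)^2\ge x$, which is also false. Iterating the condition only raises $\log$-depth polynomially, so I expect I will instead need to rebase the comparison rather than iterate from $x=2q$.

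The better route is to compare $f(p^m)$ with $f$ at a point around $m$. Gently growing gives the chain
\[
f(e^{(\log x)^{2^k}})\le A^k f(x).
\]
Taking $x=q$, the target $p^m\le e^{2q\log 2q}$ is reached once $(\log q)^{2^k}\ge 2q\log 2q$, that is, once $2^k\approx \log q/\log\log q$. This yields an unbounded power of $A$, which is bad.

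So this is the main obstacle: the condition $p^m\le$ roughly $e^{m\log m}$ is at the scale $x^{\log x}$ only when $x\approx e^{\sqrt{m\log m}}$, not when $x\approx m$. The fix is to choose $p$ differently. I would take $p$ to be the least prime with $f(p)\ge 2m$, no longer requiring $p>m$, and hope that $f$ grows slowly enough (it is at most polylogarithmic-type by gentleness) that this least $p$ is huge compared to $m$. Then $p^m\le p^{\log p}$, which gives
\[
f(p^m)\le Af(p)\le A\bigl(f(q)\cdot\text{const}\bigr)
\]
via $p\le 2q$ and one more application of gentleness. This gives $f(p^m)\le A'\cdot 2m$, and the constant is $B=1/A'$.

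The steps I expect to need real care are, first, showing that gentleness forces $f(n)=O((\log n)^{\epsilon})$-type slow growth, so that $p\ge e^{m}$ and hence $p>m$ with $m\le\log p$; and second, handling $f(2q)\le Af(q)$, which holds because $2q\le q^{\log q}$.
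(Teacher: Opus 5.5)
Your final route is correct and is essentially the standard Lubotzky--Segal argument that the paper simply cites. Take $p$ to be the least prime with $f(p)\ge 2m$, let $q$ be the preceding prime, and use $p\le 2q$ (Bertrand). Then
\[
f(p^m)\le f(p^{\log p})\le Af(p)\le Af(2q)\le Af(q^{\log q})\le A^2f(q)<2A^2m ,
\]
so $B=A^{-2}$ works, provided $p>e^m$.

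You leave that proviso as a hope, but it follows at once from the chain you already wrote. With $F(u)=f(e^u)$, gentleness reads $F(u^2)\le AF(u)$, so $f(n)=O\bigl((\log\log n)^{\log_2 A}\bigr)$. Hence $f(e^m)<2m\le f(p)$ for large $m$, and monotonicity gives $p>e^m>m$. Mere polylogarithmic growth would not be enough here: you need $f(n)<2\log n$ eventually. The same observation rules out your case (a) for large $m$, and it repairs case (b), where you bounded $m$ by $q$ instead of by $\log p$.
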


We are now ready for the proof of Theorem~\ref{thm:A}.

\begin{proof}[Proof of Theorem~\ref{thm:A}]
Let $f$ be a gently growing function. Let $B$ and $C$ be the constants given by Lemma~\ref{lem:prime} for $f$. As in \cite[Section~13.3]{ls:subgroupgrowth} define a sequence of primes as follows. Let $p_0\ge \max\{18,C\}$ be a prime such that $f(p_0)\ge 18$. Suppose that we chose primes $p_0,\ldots,p_{k-1}$. For $i<k$, put $\Omega_i=\mathbb{P}^1(\mathbb{F}_{p_i})$ for the projective line over $\mathbb{F}_{p_i}$, so that $\norm{\Omega_i}=p_i+1$.

As in Remark~\ref{rmk:psl2}, we consider the nice sequence of finite simple groups $(\mathrm{PSL}_2(p_k))_{k\in \mathbb{N}}$ with constants $r=2$ and $t=3$. Moreover, we consider $\mathrm{PSL}_2(p_k)$ as a subgroup of $\mathrm{Sym}(\Omega_k)$ with its natural action. Recall that we set $\tau_k=\norm{\mathrm{PSL}_2(p_k)}$ and $\mu_k=\mu(\mathrm{PSL}_2(p_k))$. Finally, note that $\norm{\widehat{\Omega}_k} = \widehat{m}_k$ with our notation.

By Lemma~\ref{lem:prime}, there exists a prime $p_k$ with $ p_k> 9\wh{m}_{k-1}$ such that
\begin{equation}
 f(p_k) \ge 18 \wh{m}_{k-1} \ge B f(p_k^{9\wh{m}_{k-1}}) \text{\quad for $k\ge 1$}.
\end{equation}

For $n= p_k^{9\wh{m}_{k-1}}$ with $k\ge 1$, using Lemma~\ref{lem:subgroupgrowth2}, Lemma~\ref{lem:prime} and the fact that $p_k^3>\tau_k>p_k$, we have that
\[
  s_n(W^\mathrm{pa}(\mathcal{P})) \ge \left( \tau_k^{3\widehat{m}_{k-1}} \right)^{\widehat{m}_{k-1}/36} \ge 
  %  \tau_k^{\frac{\widehat{m}_{k-1}^2}{36}} = \left( \tau_k^{3\widehat{m}_{k-1}} \right)^{\frac{\widehat{m}_{k-1}}{108}} \ge
  \left( p_k^{9\widehat{m}_{k-1}} \right)^{18\widehat{m}_{k-1}/36\cdot 3\cdot 18} \ge n^{cf(n)}
\]
with $c= B/1944$.

On the other hand, for every $n > p_0$ there is $k\ge 1$ such that $\mu_{k-1} \le n < \mu_k$. Note that $p_k < \mu_k \le n$ and $f$ is non-decreasing. Using Lemma~\ref{lem:subgroupgrowth1} and again Lemma~\ref{lem:prime}, we see that
\[
  s_n(W^\mathrm{pa}(\mathcal{P})) \le n^{18\widehat{m}_{k-1}} \le  n^{f(p_k)} \le n^{f(n)}.
\]
 Therefore $W^\mathrm{pa}(\mathcal{P})$ has the required subgroup growth type. 
\end{proof}

The following remark follows the lines of a similar argument after the proof of \cite[Thm.~13.3.4]{ls:subgroupgrowth}.

\begin{rmk}\label{rml:variations}
 Note that the function $f(n)=(\log n)^\varepsilon$, with $0 < \varepsilon < 1$, is not gently growing. We now mention two variations of the above construction, that allow for a wider range of growth types but for which we have less precise information.

\smallskip

\noindent \emph{Variation 1:} Let $f$ be any unbounded function. Then the sequence of primes $(p_k)_{k\in \mathbb{N}}$ can be chosen so that $s_n(W^\mathrm{pa}(\mathcal{P})) \le n^{f(n)}$ for all large $n$, while $W^\mathrm{pa}(\mathcal{P})$ does not have polynomial subgroup growth. It suffices to ensure that $p_k \ge p_{k-1}^{9\wh{m}_{ k-1}}$ and $f(p_k) \ge 18\wh{m}_{k-1}$ for each $k \ge 1$.

\smallskip

\noindent \emph{Variation 2:} Let $h$ be a positive integer, put $f(n) = (\log n)^{1/h}$ and $f_\ast(n) =(\log n)^{1/(h+1)}$. Then the sequence of primes $(p_k)_{k\in \mathbb{N}}$ can be chosen so that
$s_n(W^\mathrm{pa}(\mathcal{P})) \le n^{f(n)}$ for all large $n$, $s_n(W^\mathrm{pa}(\mathcal{P})) \ge n^{c f_\ast(n)}$
for infinitely many $n$, where $c$ is a positive constant. Again, it suffices to ensure
that $(18\wh{m}_{k-1})^h \le \log p_k \le 2 \cdot (18\wh{m}_{k-1})^h$ for each $k \ge 1$. 

Let us work through the details of Variation 2: if $(18\wh{m}_{k-1})^h \le \log p_k$, then $18\wh{m}_{k-1} \le f(p_k)$, which is what we used in the upper bound. On the other hand, 
\begin{multline*}
  \log p_k \le 2 \cdot (18\wh{m}_{k-1})^h  \Longleftrightarrow (9\wh{m}_{k-1})^h \log p_k \le (18 \wh{m}_{k-1})^{h+1} \Longleftrightarrow \\ \log (p_k^{9\wh{m}_{k-1}}) \le (18 \wh{m}_{k-1})^{h+1} \Longleftrightarrow f_\ast(p_k^{9\wh{m}_{k-1}}) \le 18 \wh{m}_{k-1}
\end{multline*}
which is the inequality used in the lower bound in the proof of Theorem~\ref{thm:A}.
\end{rmk}

\begin{rmk}\label{rmk:exp_growth}
  We point out that there are hereditarily just infinite pro-$p$ groups of a very different kind (\cite[Thm.~1.7]{ershov_jaikin:pwd}), these are quotients of pro-$p$ groups of \emph{positive weighted deficiency} (PWD pro-$p$ groups for short). By \cite[Thm.~B.1]{ershov_jaikin:kazhdan}, PWD pro-$p$ groups have exponential subgroup growth type and we suspect that it is possible to construct hereditarily just infinite pro-$p$ groups with exponential subgroup growth in this way. 
\end{rmk}

\def\cprime{$'$}

\end{document}